\newcommand{\executeiffilenewer}[3]{%
\ifnum\pdfstrcmp{\pdffilemoddate{#1}}%
{\pdffilemoddate{#2}}>0%
{\immediate\write18{#3}}\fi%
}
\newcommand{%
\executeiffilenewer{.svg}{.pdf}%
{inkscape -z -D --file=.svg %
--export-pdf=.pdf --export-latex}%
\input{.pdf_tex}%
}[1]{%
\executeiffilenewer{#1.svg}{#1.pdf}%
{inkscape -z -D --file=#1.svg %
--export-pdf=#1.pdf --export-latex}%
\input{#1.pdf_tex}%
}
\newtheorem{theorem}{Theorem}[section]
\newtheorem{corollary}[theorem]{Corollary}
\newtheorem{proposicio}[theorem]{Proposition}
\DeclareMathOperator{\spec}{sp}
\def\1{\mbox{\boldmath $1$}}
\def\vec0{\mbox{\boldmath $0$}}
\def\A{\mbox{\boldmath $A$}}
\def\J{\mbox{\boldmath $J$}}
\def\G{\Gamma}
\def\Re{\mathbb R}
\def\Z{\mathbb Z}
\begin{document}
\title{Cospectral digraphs from\\
locally line digraphs
}

\author{C. Dalf\'o$^a$, M. A. Fiol$^b$
\\ \\
{\small $^{a,b}$Departament de Matem\`atiques, Universitat Polit\`ecnica de Catalunya} \\
{\small $^b$Barcelona Graduate School of Mathematics} \\
{\small Barcelona, Catalonia} \\
{\small {\tt\{cristina.dalfo,miguel.angel.fiol\}@upc.edu}} \\
}
\date{}
\maketitle

\begin{abstract}
A digraph $\G=(V,E)$ is a line digraph when every pair of vertices $u,v\in V$ have either equal or disjoint in-neighborhoods. When this condition only applies for vertices in a given subset (with at least two elements), we say that $\G$ is a locally line digraph.
In this paper we give a new method to obtain a digraph $\G'$ cospectral with a given locally line digraph $\G$ with diameter $D$, where the diameter $D'$ of $\G'$ is in the interval $[D-1,D+1]$.
In particular, when the method is applied to De Bruijn or Kautz digraphs, we obtain cospectral digraphs with the same algebraic properties that characterize the formers.
\end{abstract}

\noindent{\em Mathematics Subject Classifications:} 05C20, 05C50.

\noindent{\em Keywords:} Digraph, adjacency matrix, spectrum, cospectral digraph, diameter, De Bruijn digraph, Kautz digraph.

\section{Preliminaries}

In this section we recall some basic terminology and simple results concerning digraphs and their spectra. For the concepts and/or results not presented here, we refer the reader to some of
the basic textbooks and papers on the subject; for instance, Chartrand and
Lesniak~\cite{cl96} and Diestel~\cite{d10}.

Through this paper, $\G=(V,E)$ denotes a digraph, with set of vertices $V=V(\G)$ and
set of arcs (or directed edges) $E=E(\G)$, that is
strongly connected (namely, every vertex is connected to any other vertex by traversing the arcs in their corresponding direction).
An arc from vertex $u$ to vertex $v$ is denoted by either $(u,v)$ or $u\rightarrow v$. As usual, we call {\em loop} an arc from a vertex to itself, $u\rightarrow u$, and {\em digon} to two opposite arcs joining a pair of vertices, $u\rightleftarrows v$.
The set of vertices adjacent to and from $v\in V$ is denoted by $\G^{-}(v)$ and
$\G^{+}(v)$, respectively. Such vertices are referred to as {\em in-neighbors} and {\em out-neighbors} of $v$,
respectively. Moreover, $\delta^-(v)=|\G^{-}(v)|$ and $\delta^+(v)=|\G^{+}(v)|$ are the \emph{in-degree} and \emph{out-degree} of vertex $v$, and $\G$ is {\em $d$-regular} when
$\delta^+(v)=\delta^-(v)=d$ for any $v\in V$. Similarly, given $U\subset V$, $\G^{-}(U)$ and $\G^{+}(U)$ represent the sets of vertices
adjacent to and from (the vertices of) $U$.
Given two vertex subsets $X,Y\subset V$, the subset of arcs from $X$ to $Y$ is denoted by $e(X,Y)$.

In the line digraph $L\G$ of a digraph $\G$, each vertex represents an arc of $\G$,
$V(L\G)=\{uv: (u,v)\in E(G)\}$, and a vertex $uv$ is adjacent to a vertex $wz$ when $v=w$, that is, when in $\G$ the arc $(u,v)$ is adjacent to the arc $(w,z)$: $u\rightarrow v(=w)\rightarrow z$. By the Heuchenne's condition~\cite{He64}, a digraph $\G$ is a line digraph if and only if, for every pair of vertices $u,v$, either $\G^+(u)=\G^+(v)$ or $\G^+(u)\cap\G^+(v)=\emptyset$.
Since the line digraph of the converse digraph $\overline{\G}$ (obtained from $\G$ by reversing the directions of all the arcs) equals the converse of the line digraph, $L\overline{\G}=\overline{L\G}$, the above condition can be restated in terms of the in-neighborhoods $\G^-(u)$ and $\G^-(v)$. In particular, we say that a digraph is a {\em $($U-$)$locally line digraph} if there is a vertex subset $U$ with at least two elements such that $\G^-(u)=\G^-(v)$ for every $u,v\in U$.

In the case of graphs instead of digraphs, the Godsil-McKay switching given in~\cite{GoMc93} is a technique
to obtain cospectral graphs.

\section{Main result}
The following result describes the basic transformation of a digraph $\G$ into another digraph $\G'$
modifying slightly the walk properties of the former (see Figure~\ref{fig:cjts-X-Y-Z}).

\begin{figure}[t]
    \vskip-.5cm
    \begin{center}
        \includegraphics[width=12cm]{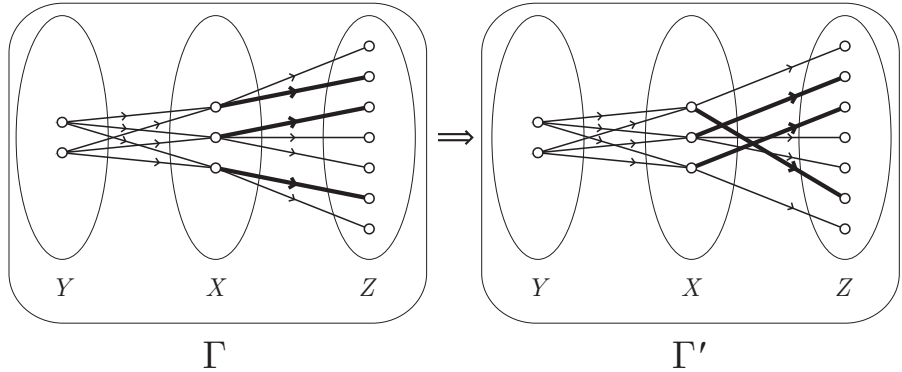}
    \end{center}
    \vskip-3.75cm
	\caption{Scheme of the sets of Theorem~\ref{teo:principal0}. The arcs that change from $\G$ to $\G'$ are represented with a thick line.}
	\label{fig:cjts-X-Y-Z}
\end{figure}

\begin{theorem}
\label{teo:principal0}
Let $\G=(V,E)$ be a digraph with diameter $D\ge 2$.
Consider a subset of vertices $X=\{x_1,\ldots,x_r\}\subset V$, $r\ge 2$,
such that the sets of the in-neighbors  of every $x_i$ are the same for every $x_i$, say, $Y=\G^{-}(x_i)$ for $i=1,\ldots,r$.  Let $Z=\G^{+}(X)$.
Let $\G'$ be the {\em modified digraph} obtained from $\G$ by changing the set of arcs $e(X,Z)$ by another
set of arcs $e'(X,Z)$ in such a way that the two following conditions are satisfied:
\begin{itemize}
\item[$(i)$]
The loops remain unchanged, that is, with $e'(Y,X)$ being the set of arcs from
$Y$ to $X$ in $\G'$, we must have $e'(Y,X)\cap e'(X,Z)=e(Y,X)\cap e(X,Z)$.
\item[$(ii)$]
For the arcs that are not loops, every vertex of $X$ has some out-going arcs to a vertex of $Z$, and every vertex of $Z$ gets some in-going arcs from a vertex of $X$.
\end{itemize}
Assume that there is a walk of length $\ell\ge 2$ from $u$ to $v$ $(u,v\in V)$ in $\G$.
\begin{itemize}
\item[$(a)$]
If $u\not\in X$, then there is also a walk of length $\ell$ from $u$ to $v$ in $\G'$.
\item[$(b)$]
If $u\in X$, then there is a walk of length at most $\ell+1$ from $u$ to $v$ in $\G'$.
\end{itemize}
\end{theorem}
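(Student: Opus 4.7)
The plan is to transform the given walk $u = u_0 \to u_1 \to \cdots \to u_\ell = v$ in $\G$ into a walk of the required length in $\G'$, locally repairing any arc in $e(X,Z) \setminus e'(X,Z)$. Two facts drive the construction: by condition $(i)$, every arc in $e(Y,X) \cap e(X,Z)$ (an intra-$X$ arc from $X \cap Y$ to $X \cap Z$) survives in $\G'$; by $(ii)$, each $x \in X$ has some non-loop out-arc to $Z$ in $\G'$ and each $z \in Z$ has some non-loop in-arc from $X$ in $\G'$.

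For $(a)$, I process the walk from right to left. Set $u_\ell' := v$ and, for $k = \ell-1, \ldots, 0$, let $u_k' := u_k$ when $u_k \notin X$; otherwise pick $u_k' = x_k' \in X$ so that $x_k' \to u_{k+1}'$ is in $\G'$. Existence of such $x_k'$ follows from $(ii)$ once one checks $u_{k+1}' \in Z$, which holds either because $u_{k+1} \in Z$ (as out-neighbor of $u_k \in X$) or because $u_{k+1}' \in X$ forces $u_k \in Y$, whence $X \cap Y \neq \emptyset$ and so $X \subseteq Z$. The check that $u_k' \to u_{k+1}'$ is in $\G'$ when $u_k \notin X$ is immediate: the arc is either $u_k \to u_{k+1}$ (outside $e(X,Z)$ since $u_k \notin X$) or $u_k \to u_{k+1}'$ with $u_{k+1}' \in X$, lying in $e(Y,X) \setminus e(X,Z)$. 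In case $(a)$, $u_0 \notin X$ forces $u_0' = u_0$, completing the walk of length $\ell$.

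For $(b)$, the same right-to-left algorithm produces $u_1', \ldots, u_\ell'$ together with a prescribed $x_0' \in X$ satisfying $x_0' \to u_1'$ in $\G'$; I then build the final walk by prepending at most one step. If $x_0' = u_0$, done with length $\ell$. If $u_1 \in X$, then $u_0 \in Y$ places $u_0$ in $X \cap Y$, so by $(i)$ the arc $u_0 \to u_1'$ (intra-$X$, since $u_1' \in X \subseteq Z$) is in $\G'$ and gives a length-$\ell$ walk. If $u_1 \notin X$ and $u_0 \to u_1$ survives, prepending yields length $\ell$; otherwise I insert one extra vertex $w$ to obtain $u_0 \to w \to u_1 \to u_2' \to \cdots \to u_\ell'$ of length $\ell + 1$ in $\G'$, choosing $w$ so that $u_0 \to w$ is a non-loop out-arc of $u_0$ in $\G'$ (from $(ii)$) and $w \to u_1$ is also an arc of $\G'$. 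The main obstacle is showing that such a $w$ always exists in this last sub-case, especially when $u_0 \notin Y$ and $X \cap Y = \emptyset$, where one cannot rely on intra-$X$ arcs; here a more delicate argument exploiting the structure of $e'(X,Z)$ given by $(ii)$, together with the hypothesis $\ell \geq 2$ to bring $u_2$ into play, is what will close the gap.
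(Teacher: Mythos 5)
Your part $(a)$ is correct and is essentially the paper's argument: replace each interior vertex $u_i\in X$ by a vertex $u_i'\in X$ supplied by condition $(ii)$, using that the predecessor lies in $Y$ and that every vertex of $Y$ is adjacent to all of $X$ by arcs that survive the modification; your right-to-left bookkeeping actually treats consecutive occurrences of vertices of $X$ more carefully than the paper, which only discusses a single offending index. The problem is part $(b)$, and the obstacle you flag in the last sub-case ($u_0\in X$, $u_1\notin X$, the arc $u_0\rightarrow u_1$ deleted) is not something a ``more delicate argument'' can remove: statement $(b)$ as literally written is false, so no choice of $w$ can work in general. Take $V=\{y,x_1,x_2,z_1,z_2,w_1,w_2,w_3\}$ with arcs $y\rightarrow x_1$, $y\rightarrow x_2$, $x_1\rightarrow z_1$, $x_2\rightarrow z_2$, $z_1\rightarrow y$, $z_2\rightarrow w_1\rightarrow w_2\rightarrow w_3\rightarrow y$; then $X=\{x_1,x_2\}$, $Y=\{y\}$, $Z=\{z_1,z_2\}$, and replacing $e(X,Z)$ by $x_1\rightarrow z_2$, $x_2\rightarrow z_1$ satisfies $(i)$ and $(ii)$. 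In $\G$ there is the walk $x_1\rightarrow z_1\rightarrow y$ of length $\ell=2$, but in $\G'$ the unique walk of length $3$ out of $x_1$ is $x_1\rightarrow z_2\rightarrow w_1\rightarrow w_2$, so there is no walk of length at most $\ell+1=3$ from $x_1$ to $y$.

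For comparison, the paper's own proof of $(b)$ takes a different route from yours: it picks, via $(ii)$, an out-neighbor $u'\in Z\setminus X$ of $u$ in $\G'$, takes a walk from $u'$ to $v$ in $\G$, and transfers it to $\G'$ by part $(a)$. That bounds $\dist_{\G'}(u,v)$ by $1+\dist_{\G}(u',v)$, which is what the ensuing corollary $D'\le D+1$ really needs, but it does not control the length by $\ell+1$ relative to the \emph{given} walk either (in the example above it yields the length-$5$ walk through $z_2$). So the difficulty you isolated is genuine rather than a missed trick; the honest conclusion is that $(b)$ must be weakened (e.g.\ to the eccentricity/diameter form), and your cases $1$--$3$ correctly identify exactly the situations in which the length-$(\ell+1)$ claim does hold.
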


\begin{proof}
$(a)$ Let $u_0(=u),u_1,\ldots, u_{\ell-1}(=v)$ be an $\ell$-walk from $u$ to $v$ in $\G$. We distinguish two
cases:
\begin{enumerate}
\item
If $u_i\notin X$ for every $i=1,\ldots,\ell-2$, the result is trivial as the walk in $\G'$ is the same as
that in $\G$.
\item
If $u_i\in X$ for some $i=1,\ldots,\ell-2$, then from the hypothesis on $X$ we must have $u_{i-1}\in Y$ and
$X\cap\G^+(u_{i-1})=X$. Moreover, by $(ii)$, in $\G'$ there is a vertex $u_i'\in X$ adjacent to $u_{i+1}$.
Thus, the required $\ell$-walk in $\G'$ is just $u_0,\ldots,u_{i-1},u_i',u_{i+1},\ldots,u_{\ell-1}$.
\end{enumerate}
$(b)$ If $u\in X$, the result is a simple consequence of $(a)$. Indeed, by $(ii)$ there is a vertex $u'\in Z\setminus X$ adjacent from $u$ (otherwise, $\G$ would not be strongly connected). Then, it suffices to consider the walk $u,u',\ldots,v$.
This completes the proof.
\end{proof}

If we consider shortest walks, the following consequence is straightforward.
\begin{corollary}
If $\G$ is a digraph with diameter $D$, the modified digraph $\G'$ $($in the sense of Theorem
\ref{teo:principal0}$)$ has diameter $D'$ satisfying $D-1\le D'\le D+1$.
\end{corollary}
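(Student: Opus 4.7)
The plan is to establish the two halves $D' \le D+1$ and $D' \ge D-1$ separately, the first by a direct application of Theorem~\ref{teo:principal0} and the second by exploiting the evident symmetry of the modification.

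For the upper bound $D' \le D+1$, I would take any pair $u, v \in V$ and set $\ell = \dist_\G(u, v) \le D$. Whenever $\ell \ge 2$, cases $(a)$ and $(b)$ of the theorem produce a walk in $\G'$ of length at most $\ell + 1 \le D + 1$, so $\dist_{\G'}(u, v) \le D + 1$. The edge case $\ell = 1$ is trivial when the arc $(u, v)$ survives in $\G'$; otherwise $(u, v) \in e(X, Z) \setminus e'(X, Z)$, which forces $u \in X$ and $v \in Z$, and I would route through an out-neighbor of $u$ in $\G'$ supplied by condition $(ii)$, then apply the theorem to an auxiliary walk of length between $2$ and $D$ from that out-neighbor to $v$ in $\G$ (whose existence follows from the strong connectivity of $\G$). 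Taking the maximum over pairs gives $D' \le D + 1$.

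For the lower bound $D' \ge D - 1$, I would observe that the modification is reversible: swapping $e(X, Z)$ and $e'(X, Z)$ transforms $\G'$ back into $\G$. The triple $(X, Y, Z)$ remains admissible data for $\G'$ because the arcs entering $X$ are never modified, so $\G'^{-}(x_i) = Y$ for every $i$, and condition $(ii)$ forces $\G'^{+}(X) = Z$. Condition $(i)$ is plainly symmetric in $e$ and $e'$, and condition $(ii)$ holds in $\G$ by the definition $Z = \G^{+}(X)$ together with strong connectivity. Applying Theorem~\ref{teo:principal0} to this reverse modification then yields $\dist_{\G}(u, v) \le \dist_{\G'}(u, v) + 1$ for every pair, hence $D \le D' + 1$, i.e., $D' \ge D - 1$.

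The main technical points are the $\ell = 1$ subcase in the upper bound, where the theorem cannot be applied literally and must be supplemented by an appeal to strong connectivity of $\G$ and condition $(ii)$, and the careful verification that every hypothesis of the theorem, most notably the (implicit) strong connectivity of $\G'$, persists under the swap $e \leftrightarrow e'$ used in the lower bound.
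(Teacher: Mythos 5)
The paper itself gives no argument here beyond ``if we consider shortest walks, the following consequence is straightforward,'' and your overall strategy --- Theorem~\ref{teo:principal0} applied to shortest walks for $D'\le D+1$, and reversibility of the modification for $D'\ge D-1$ --- is exactly the intended elaboration. The lower-bound half is essentially the same symmetry the paper later invokes in Proposition~\ref{propo-cospectral}.

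The one place where your argument as written does not close is the subcase $\ell=\dist_\G(u,v)=1$ with $(u,v)$ a deleted arc. Two concrete problems. First, you appeal to ``a walk of length between $2$ and $D$ from that out-neighbor to $v$, whose existence follows from the strong connectivity of $\G$'': strong connectivity only gives $\dist_\G(u',v)\le D$, and this distance may be $0$ or $1$; in a digraph one cannot in general pad a walk to force its length into $[2,D]$. Second, condition $(ii)$ only guarantees an out-neighbor $u'\in Z$, and $Z\cap X$ may be nonempty; if $u'\in X$, the theorem applied to a walk of length $\ell'\le D$ from $u'$ to $v$ lands you in case $(b)$ and yields only $\ell'+1\le D+1$, for a total of $D+2$. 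The repair is the device already used in the paper's proof of part $(b)$ of the theorem: pick $u'\in Z\setminus X$ adjacent from $u$ in $\G'$ (such a vertex exists, else $\G$ would not be strongly connected), then handle $\dist_\G(u',v)\in\{0,1\}$ directly (for distance $1$ the arc $(u',v)$ is untouched because $u'\notin X$) and $\dist_\G(u',v)\ge 2$ via case $(a)$, giving $1+\dist_\G(u',v)\le D+1$. A smaller caveat on your lower bound: you justify $\G'^-(x_i)=Y$ by saying arcs entering $X$ are never modified; condition $(i)$ does protect the arcs from $X\cap Y$ to $X\cap Z$ against deletion, but it does not literally forbid $e'(X,Z)$ from adding a new arc into a vertex of $X\cap Z$ from a vertex of $X\setminus Y$, which would break the common-in-neighborhood hypothesis for $\G'$. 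This loophole is inherited from the paper (its own reversibility claim in Proposition~\ref{propo-cospectral} needs the same assumption), so it is not a defect specific to your argument, but it should be flagged rather than asserted.
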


Note that the case $D'=D-1$ could happen when, in $\G$, all vertices not in $X$ have eccentricity $D-1$ and, in
$\G'$ all vertices in $X$ result with the same eccentricity $D-1$.

Examples of the case when the diameter remains unchanged, $D'=D$, are provided by the modified De Bruijn digraphs (see Section~\ref{secD}).

\section{Cospectral digraphs}
First notice that, because of the condition $Y=\Gamma^{-}(x_i)$, $i=1,\ldots,r$,
the spectrum of $\G$ contains the eigenvalue $0$ with multiplicity $m(0)\ge r-1$.
Indeed, suppose that its adjacency matrix $\A$ is indexed in such a way that the first $r$ rows correspond to
the vertices of $X$. Then, the $r-1$ (column) vectors
$(1,-1,0,0,0,\ldots,0)$, $(0,1,-1,0,0,\ldots,0)$,\ldots, $(0,\ldots,0,1,-1,0,\ldots,0)$ are clearly linearly
independent, and they are also eigenvectors with eigenvalue $0$. For more details, see Fiol and Mitjana~\cite{fm07}.

Another interesting consequence of Theorem~\ref{teo:principal0} is the following relationship between the adjacency matrices of $\G$ and $\G'$, in the particular case when the in-degrees of the vertices of $Z$ are
preserved.

\begin{proposicio}
\label{propoA'A}
Assume that in the modified digraph $\G'$ from $\G$, every vertex of $Z$ gets the same number of
in-going arcs as in $\G$. That is,
$\G'^{-}(v)=\G^-(v)$ for every $v\in Z$.
Let $\A=(a_{uv})$ and $\A'=(a'_{uv})$ be the adjacency matrices of $\G$ and $\G'$, respectively.
Then, for any polynomial $p\in \Re[x]$ without constant term, say, $p(x)=xq(x)$, with $\deg q=\deg p-1$, we
have
\begin{equation}
\label{matrixEq}
p(\A')=\A'q(\A).
\end{equation}
\end{proposicio}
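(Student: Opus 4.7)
The plan is to reduce the identity $p(\A')=\A'q(\A)$ to a single family of matrix equations and then prove those by direct entrywise inspection. By linearity in $p$, if $p(x)=\sum_{k\ge 1}c_kx^k$ and $q(x)=\sum_{k\ge 1}c_kx^{k-1}$, it suffices to establish
\[
(\A')^k=\A'\A^{k-1}\qquad(k\ge 1).
\]
The case $k=1$ is trivial. The case $k=2$, namely $\A'(\A'-\A)=0$, is the whole content of the proposition: once it is known, a one-line induction gives the rest, since
\[
(\A')^{k+1}=\A'(\A')^{k}=\A'\A'\A^{k-1}=(\A')^{2}\A^{k-1}=\A'\A\cdot\A^{k-1}=\A'\A^{k}.
\]

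To prove $\A'(\A'-\A)=\vec 0$, I would set $\M=\A'-\A$ and examine an arbitrary entry
\[
(\A'\M)_{uv}=\sum_{w\in V}\A'_{uw}\M_{wv}.
\]
The modification affects only arcs in $e(X,Z)$, whose sources lie in $X$, so $\M_{wv}=0$ whenever $w\notin X$. Therefore the sum reduces to $\sum_{x\in X}\A'_{ux}\M_{xv}$. The next ingredient is the locally-line hypothesis: each $x\in X$ has in-neighborhood $Y$ in $\G$, and this remains true in $\G'$ because the only in-arcs to $X$ that could have been touched are those lying in $e(Y,X)\cap e(X,Z)$, which condition~$(i)$ preserves. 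Consequently $\A'_{ux}=1$ if $u\in Y$ and $0$ otherwise, uniformly in $x\in X$, and the entry becomes
\[
(\A'\M)_{uv}=[u\in Y]\sum_{x\in X}\bigl(\A'_{xv}-\A_{xv}\bigr).
\]

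What is left is to see that $\sum_{x\in X}(\A'_{xv}-\A_{xv})=0$ for every $v$. If $v\notin Z$, both terms vanish because $\G^+(X)\subseteq Z$ and $\G'^+(X)\subseteq Z$ by construction. If $v\in Z$, the sum equals $|\G'^-(v)\cap X|-|\G^-(v)\cap X|$; since the in-arcs to $v$ coming from $V\setminus X$ are left untouched by the modification, the hypothesis $\G'^-(v)=\G^-(v)$ forces this difference to be zero. Hence $\A'\M=\vec 0$, which closes the base case and completes the argument.

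I expect the main subtlety to be the verification that the in-neighborhoods of the vertices of $X$ are not disturbed by the modification: this is where condition~$(i)$ is essential, because it handles exactly the overlap $X\cap Y$ versus $X\cap Z$ that could otherwise break the uniformity $\A'_{ux}=[u\in Y]$. Once that point is cleared, the rest is an elementary column-sum computation driven by the in-degree preservation assumption, together with the formal induction and linearity described above.
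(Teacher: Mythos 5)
Your proof is correct and takes essentially the same route as the paper's: both reduce the claim to the single identity $\A'\A'=\A'\A$ and verify it entrywise using that only the out-arcs of $X$ are modified, that every vertex of $X$ keeps the common in-neighborhood $Y$ in $\G'$, and that the in-degrees on $Z$ are preserved. Your write-up merely makes explicit the factor $[u\in Y]$ (handling $u\notin Y$ separately) and the induction to higher powers, both of which the paper leaves implicit.
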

\begin{proof}
We only need to prove that $\A'\A=\A'\A'$. Since the only modified arcs are those adjacent from the vertices of
$X$, we have
\begin{align*}
(\A'\A)_{uv} &=\sum_{x\in X}a'_{ux}a_{xv}+\sum_{x\notin X}a'_{ux}a_{xv}
 =|X\cap \G^-(v)|+\sum_{x\notin X}a'_{ux}a_{xv}\\
  &=|X\cap \G'^-(v)|+\sum_{x\notin X}a'_{ux}a_{xv}
 =\sum_{x\in X}a'_{ux}a'_{xv}+\sum_{x\notin X}a'_{ux}a'_{xv}=
 (\A'\A')_{uv},
\end{align*}
where we used that every vertex of $Z$ in $\G'$ gets the same number of in-going arcs as in $\G$.
\end{proof}

\begin{proposicio}
\label{propo-cospectral}
Within the conditions of Proposition~\ref{propoA'A}, the digraphs
$\G$ and $\G'$ are cospectral.
\end{proposicio}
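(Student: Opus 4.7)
The plan is to verify that $\tr(\A^k)=\tr(\A'^k)$ for every $k\ge 1$; Newton's identities then force $\A$ and $\A'$ to have the same characteristic polynomial, hence the same spectrum.

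For $k=1$ the identity simply says that $\G$ and $\G'$ have the same number of loops. Since the modification only alters arcs in $e(X,Z)$, loops at vertices outside $X$ are automatically untouched, and condition~$(i)$ of Theorem~\ref{teo:principal0} was tailored precisely to preserve loops at vertices of $X$ (the arcs lying in $e(Y,X)\cap e(X,Z)$). For $k\ge 2$ I would apply Proposition~\ref{propoA'A} with $p(x)=x^k$ and $q(x)=x^{k-1}$ to obtain $\A'^k=\A'\A^{k-1}$; cyclic invariance of the trace then gives
\[
\tr(\A'^k)=\tr(\A'\A^{k-1})=\tr(\A^{k-1}\A').
\]
To close the argument I would establish the companion identity $\A\A'=\A^2$ and rewrite
\[
\A^{k-1}\A'=\A^{k-2}(\A\A')=\A^{k-2}\A^2=\A^k,
\]
yielding $\tr(\A'^k)=\tr(\A^k)$ as required.

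The one genuine obstacle worth singling out is the companion identity $\A\A'=\A^2$, which Proposition~\ref{propoA'A} does not hand us directly: it only delivers the ``left'' identity $\A'\A=\A'^2$. I expect the mirror identity to come essentially for free from the same calculation after swapping the roles of $\A$ and $\A'$, since the hypotheses are symmetric in this respect: in $\G'$ the vertices of $X$ still share the common in-neighborhood $Y$ (arcs into $X$ are not modified), and the in-degrees of the vertices of $Z$ coincide in $\G$ and $\G'$ by assumption. Consequently, the entrywise computation of $(\A\A')_{uv}$ is the exact analogue of the one performed for $(\A'\A)_{uv}$ in the proof of Proposition~\ref{propoA'A}, and it produces the required equality without any new ingredient.
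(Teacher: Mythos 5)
Your proof is correct, but it takes a genuinely different route from the paper's. The paper applies Proposition~\ref{propoA'A} with $q=\phi_{\G}$ to get $\A'\phi_{\G}(\A')=\A'\phi_{\G}(\A)=0$ via Cayley--Hamilton, deduces that $x\phi_{\G}$ is a multiple of $\phi_{\G'}$, obtains the reverse divisibility by viewing $\G$ as a modification of $\G'$, and concludes from degrees and monicity. You instead compare power sums and invoke Newton's identities. The one ingredient you flag as a risk, the companion identity $\A\A'=\A^2$, does hold and is proved exactly as you predict: splitting $(\A\A')_{uv}=\sum_{x\in X}a_{ux}a'_{xv}+\sum_{x\notin X}a_{ux}a'_{xv}$, the second sum is untouched since only out-arcs of $X$ change, while in the first sum the factor $a_{ux}$ is $1$ for all $x\in X$ or $0$ for all $x\in X$ according as $u\in Y$ or not --- note this uses the common in-neighborhood of $X$ in $\G$ (the original hypothesis), not in $\G'$ as your justification says, a harmless slip --- and then $\sum_{x\in X}a'_{xv}=|X\cap\G'^{-}(v)|=|X\cap\G^{-}(v)|=\sum_{x\in X}a_{xv}$ by the in-degree hypothesis. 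With that, $\tr(\A'^k)=\tr(\A^{k-1}\A')=\tr(\A^{k-2}(\A\A'))=\tr(\A^k)$ for $k\ge 2$, and $k=1$ is the loop preservation guaranteed by condition $(i)$ (indeed immediate under $\G'^{-}(v)=\G^{-}(v)$ for $v\in Z$, since every modified arc ends in $Z$). Your companion identity plays the same structural role as the paper's symmetry step ``$\G$ is a modification of $\G'$'': both proofs need one input from the mirror side. What your version buys is that it is more elementary and sidesteps the paper's delicate inference from $\A'\phi_{\G}(\A')=0$ to $x\phi_{\G}(x)=r(x)\phi_{\G'}(x)$, which as written only yields divisibility by the minimal polynomial of $\A'$; the cost is the separate $\tr\A=\tr\A'$ check and the appeal to Newton's identities, which is unproblematic in characteristic zero.
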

\begin{proof}
First, note that Eq.~\eqref{matrixEq} is equivalent to state that, for any polynomial $q\in \Re[x]$,
$$
 \A'q(\A')=\A'q(\A).
$$
In particular, if $q(x)=\phi_{\G}(x)$ is the characteristic polynomial of $\G$, the above equation gives
$$
\A'\phi_{\G}(\A')=\A'\phi_{\G}(\A)=0,
$$
so that the polynomial $x\phi_{\G}(x)$ is a multiple of the characteristic polynomial $\phi_{\G'}(x)$ of $\G'$, say,
$x\phi_{\G}(x)=r(x)\phi_{\G'}(x)$ with $\deg r=1$.
Analogously, since $\G$ can be seen as a modified digraph of $\G'$ ($G$ satisfies Proposition \ref{propoA'A}), we get
$x\phi_{\G'}(x)=s(x)\phi_{\G}(x)$ with $\deg s=1$.
Then, we deduce that $\phi_{\G}(x)$ and $\phi_{\G'}(x)$ can only differ by a constant, but, as they
are both monic polynomials, $\phi_{\G}(x)=\phi_{\G'}(x)$ and $\spec \G=\spec \G'$, as claimed.
\end{proof}

Given a digraph $\G$, its converse digraph $\overline{\G}$ has the same vertex set as $\G$, but all the directions of the arcs are reversed. Then, the walks of $\G$ and $\overline{\G}$ are in correspondence, and, as the adjacency matrix of $\overline{\G}$
is the transpose of that of $\G$, both digraphs are cospectral.
These facts leads us to the symmetric-like result of Theorem \ref{teo:principal0}
and Proposition \ref{propo-cospectral}:

\begin{corollary}
\label{coro:principal0}
Let $\G=(V,E)$ be a digraph with diameter $D\ge 2$.
Consider a subset of vertices $X'=\{x_1,\ldots,x_r\}\subset V$, $r\ge 2$,
such that the sets of the out-neighbors  of every $x_i$ are the same for every $x_i$,
say, $Y'=\G^{+}(x_i)$ for $i=1,\ldots,r$.  Let $Z'=\G^{-}(X')$.
Let $\G'$ be the {\em modified digraph} obtained from $\G$ by changing the set of arcs $e(Z',X')$ by another set of arcs $e'(Z',X')$ in such a way that the two following conditions are satisfied:
\begin{itemize}
\item[$(i)$]
The loops remain unchanged, that is, with $e'(X',Y')$ being the set of arcs from $X'$ to $Y'$
in $\G'$, we must have $e'(X',Y')\cap e'(Z',X')=e(X',Y')\cap e(Z',X')$.
\item[$(ii)$]
For the arcs that are not loops, every vertex of $X'$ has some in-going arcs from a vertex of $Z'$, and every vertex of $Z'$ gets some out-going arcs to a vertex of $X'$.
\end{itemize}
Then, the following hold.
\begin{itemize}
\item[$(a)$]
The diameter $D'$ of $\G'$ lies between $D-1$ and $D+1$.
\item[$(b)$]
If, in the modified digraph $\G'$, every vertex of $Z$ gets the same
out-going arcs as in $\G$, then $\G'$ and $\G$ are cospectral.
\end{itemize}
\end{corollary}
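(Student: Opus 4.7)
The plan is to obtain Corollary~\ref{coro:principal0} by applying Theorem~\ref{teo:principal0} and Proposition~\ref{propo-cospectral} to the converse digraph $\overline{\G}$. The key observation, already noted just before the corollary, is that reversing all arcs interchanges in- and out-neighborhoods, preserves diameter (walks correspond bijectively), and preserves the spectrum (the adjacency matrix is transposed).

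More precisely, I would set $\G_0:=\overline{\G}$ and take $X:=X'$ inside $\G_0$. Then $\G_0^-(x_i)=\G^+(x_i)=Y'$ for every $i$, so $X$ plays the role of the vertex subset with common in-neighborhood $Y:=Y'$ required by Theorem~\ref{teo:principal0}, and moreover $Z:=\G_0^+(X)=\G^-(X')=Z'$. A modification of $\G$ that replaces the arcs $e(Z',X')$ by $e'(Z',X')$ then corresponds, via reversal, to a modification of $\G_0$ that replaces the arcs $e(X,Z)$ by $e'(X,Z)$, producing precisely $\overline{\G'}$. The routine-but-necessary check is that conditions $(i)$ and $(ii)$ of the corollary are the image under reversal of conditions $(i)$ and $(ii)$ of Theorem~\ref{teo:principal0}: loops stay loops under reversal, and "every vertex of $X'$ has some in-going arcs from $Z'$ and every vertex of $Z'$ gets some out-going arcs to $X'$" is exactly the reversal of the non-loop condition in the theorem.

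Once this translation is in place, part $(a)$ follows immediately: the corollary after Theorem~\ref{teo:principal0} gives $D-1\le \dgr(\overline{\G'})\le D+1$, and since reversal preserves diameter, $\dgr(\G')=\dgr(\overline{\G'})$, so $D-1\le D'\le D+1$. For part $(b)$, the extra hypothesis "every vertex of $Z'$ gets the same out-going arcs in $\G'$ as in $\G$" translates under reversal into "every vertex of $Z$ gets the same in-going arcs in $\overline{\G'}$ as in $\overline{\G}$", which is precisely the hypothesis $\overline{\G'}^-(v)=\overline{\G}^-(v)$ of Proposition~\ref{propoA'A}. Applying Proposition~\ref{propo-cospectral} to $\overline{\G}$ and $\overline{\G'}$ yields $\spec\overline{\G}=\spec\overline{\G'}$, and combining this with $\spec\G=\spec\overline{\G}$ and $\spec\G'=\spec\overline{\G'}$ gives $\spec\G=\spec\G'$.

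I do not expect a genuine obstacle here; the argument is essentially bookkeeping about the duality between a digraph and its converse. The only step that deserves care is the verification that conditions $(i)$ and $(ii)$ are indeed self-dual under arc reversal in the form stated, since the auxiliary sets $Y,Y',Z,Z'$ swap roles. Once that correspondence is written out cleanly, parts $(a)$ and $(b)$ are direct corollaries of the results already proved.
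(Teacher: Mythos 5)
Your proposal is correct and follows exactly the paper's own (one-line) proof: modify the converse digraph $\overline{\G}$ according to Theorem~\ref{teo:principal0} and Proposition~\ref{propo-cospectral}, then take the converse of the result. You simply spell out the bookkeeping of the duality (the identifications $X=X'$, $Y=Y'$, $Z=Z'$ and the self-duality of conditions $(i)$ and $(ii)$) that the paper leaves implicit.
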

\begin{proof}
Modify the converse digraph of $\G$ according to Theorem \ref{teo:principal0} and Proposition \ref{propo-cospectral}, and then take the converse digraph of the result.
\end{proof}

\section{The modified De Bruijn digraphs}
\label{secD}
The results of the preceding section can be used to obtain digraphs with specific distance-related or walk
properties. Let us begin with the case of the so-called equi-reachable digraphs, of which the well-known De Bruijn digraphs are a particular example.

Let $\G=(V,E)$ be a digraph with diameter $D$, and suppose that, for every pair of vertices $u,v\in V$, there is a walk of constant length $m(\ge D)$ from $u$ to $v$. If $\ell$ is the smallest of such an $m$, we say that $\G$ is {\em $\ell$-reachable}. Some times the term {\em equi-reachable} is used for digraphs that are $\ell$-reachable (for some $\ell$), that is, for digraphs with walks of equal length between vertices.

If $\Gamma$ is $\ell$-reachable and has maximum out-degree $d$, then its order is at most $N = d^{\ell}$, since this is the maximum number of different walks of length $\ell$ from a given vertex. To attain this bound there should be
just one walk of length $\ell$ between any two vertices. Then,
the adjacency matrix $\A$ of $\Gamma$ must verify the matrix equation
 \begin{equation}
\label{matrixEqdB}
\A^{\ell}=\J,
\end{equation}
and, therefore, $\Gamma$ must be $d$-regular,
see Hoffman and McAndrew~\cite{hmc65}. Note
also that these digraphs must be geodesic (that is, with just one
shortest path between any two vertices).

The $\ell$-reachable digraphs with $d^{\ell}$ vertices were studied by Mendelsohn in~\cite{m70} as {\em UPP
digraphs} (digraphs with the unique
path property of order $\ell$), and by Conway and Guy~\cite{cg82}, unaware of the work of Mendelsohn, as
{\em tight precisely $\ell$-steps digraphs},
using them to construct large transitive digraphs of given diameter.
Equi-reachable digraphs were also studied by Fiol, Alegre, Yebra, and F\`abrega~\cite{FiAlYeFa85}.

Among the UPP digraphs, there are the well-known De Bruijn (or Good-De Bruijn) digraphs $B(d,\ell)$, whose set of
vertices consists of all words of length $\ell$ from an alphabet of $d$ symbols, say $\Z_d=\{0,1,\ldots,d-1\}$, and a vertex $x$
is adjacent to a vertex $y$ if the last $\ell-1$ symbols of $x$ coincide with the first $\ell-1$ symbols of $y$.
The De Bruijn digraphs $B(2,\ell)$ for $\ell=1,2,3,4$ are shown in
Figure~\ref{fig:de-bruijn-normals}. In general, it is well-known that the digraph $B(d,\ell)$ is $d$-regular
with diameter $D=\ell$, and it is the line digraph of $B(d,\ell-1)$. Moreover, its adjacency matrix satisfies Eq.~\eqref{matrixEqdB}, which,
as said before, it is the algebraic condition for being $\ell$-reachable.
For more details, see Fiol, Yebra and Alegre~\cite{FiYeAl83,FiYeAl84}.

\begin{figure}[t]
    \vskip-.5cm
    \begin{center}
        \includegraphics[width=15cm]{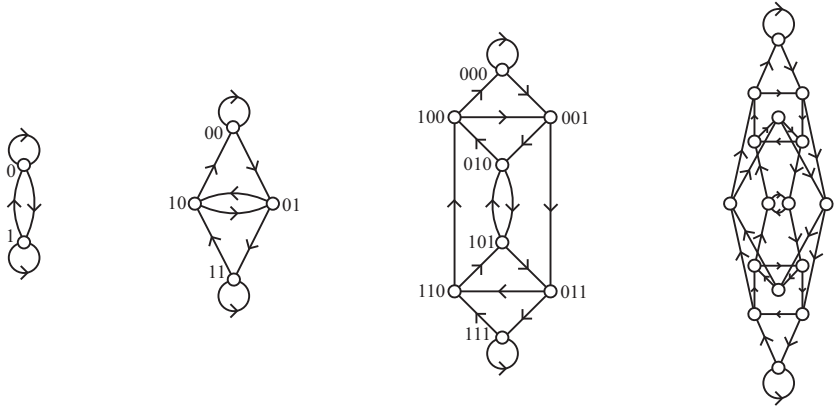}
    \end{center}
    \vskip-5.5cm
	\caption{The De Bruijn digraphs $B(2,1)$, $B(2,2)$, $B(2,3)$, and $B(2,4)$.}
	\label{fig:de-bruijn-normals}
\end{figure}

The De Bruijn digraphs are not the only UPP digraphs. For instance, for $d=3$ and
$\ell=2$ Mendelsohn presented in~\cite{m70} five other nonisomorphic such digraphs that can be seen as models
of groupoids. More generally,
UPP digraphs can be seen as models of a universal algebra, for more information see
Mendelsohn~\cite{m68}.

To obtain a UPP digraph by modifying  $B(d,\ell)$ according to Proposition~\ref{propoA'A}, we need the
modified digraph $B'(d,\ell)$ to have the same diameter $\ell$, as shown in the following result.

\begin{proposicio}
\label{sequence-dB-modify}
Let $\G=B(d,\ell)$. For some fixed values $x_i\in \Z_{d}$, $i=1,2,\ldots,\ell-1$, not all of them being equal (to avoid loops), consider the vertex set
$X=\{x_1x_2\ldots x_{\ell-1}k : k\in \Z_{d}\}$.
Let $\alpha_j$, for $j\in \Z_{d}$, be $d$-permutations of $0,1,\ldots,d-1$.
Let $\G'=B'(d,\ell)$ be the modified digraph obtained by changing the out-going arcs of $X$ in such a way that
every vertex $x_1x_2\ldots x_{\ell-1}k\in X$ is adjacent to the $d$ vertices
\begin{equation}
\label{vertex-of-Z}
x_2x_3\ldots x_{\ell-1}\alpha_j(k)j,\qquad k=0,1,\ldots,d-1.
\end{equation}
Then $\G'$ is a $d$-regular digraph with the same diameter $D'=\ell$ as $\G=B(d,\ell)$, and it is $\ell$-reachable.
\end{proposicio}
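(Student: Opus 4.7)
The plan is to verify that $\G'$ fits the framework of Theorem~\ref{teo:principal0} and Proposition~\ref{propoA'A}, then to deduce $\ell$-reachability from a direct matrix computation, and finally to pin down the diameter with a Moore-bound argument.

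First, I would identify the sets of Theorem~\ref{teo:principal0} inside $B(d,\ell)$. Each $u\in X$ has the form $x_1x_2\ldots x_{\ell-1}k$, so the common in-neighborhood is $Y=\{y x_1x_2\ldots x_{\ell-1}:y\in\Z_d\}$, while $Z=\G^+(X)=\{x_2\ldots x_{\ell-1}mj:m,j\in\Z_d\}$. The hypothesis that the $x_i$ are not all equal forces $X\cap Z=\emptyset$, so no loops lie in $e(X,Z)$ and condition~$(i)$ is vacuous. For condition~$(ii)$, the $d$ out-neighbors of $x_1\ldots x_{\ell-1}k$ prescribed in~\eqref{vertex-of-Z} have pairwise distinct last symbols $j\in\Z_d$, hence are $d$ distinct elements of $Z$; and, because each $\alpha_j$ is a permutation, any $v=x_2\ldots x_{\ell-1}mj\in Z$ receives in $\G'$ exactly one arc from $X$, namely from $x_1\ldots x_{\ell-1}\alpha_j^{-1}(m)$. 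Since $v$ likewise receives exactly one arc from $X$ in $\G$, the in-degree of every vertex of $Z$ is preserved, so $\G'$ is $d$-regular and the hypothesis of Proposition~\ref{propoA'A} holds.

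Next, I would apply Proposition~\ref{propoA'A} with $p(x)=x^\ell$ and $q(x)=x^{\ell-1}$ to obtain the matrix identity $\A'^\ell=\A'\A^{\ell-1}$, and check that the right-hand side equals $\J$. A direct count in $B(d,\ell)$ shows that $(\A^{\ell-1})_{w,v}$ equals $1$ when the last symbol of $w$ coincides with the first symbol of $v$, and $0$ otherwise. Hence $(\A'\A^{\ell-1})_{u,v}=\sum_w A'_{u,w}(\A^{\ell-1})_{w,v}$ counts the out-neighbors of $u$ in $\G'$ whose last symbol matches the first symbol of $v$. When $u\notin X$, the out-neighborhood of $u$ is the usual De Bruijn shift and its elements cover all last symbols in $\Z_d$; when $u=x_1\ldots x_{\ell-1}k\in X$, formula~\eqref{vertex-of-Z} again produces out-neighbors whose last symbols are $0,1,\ldots,d-1$. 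In either case exactly one out-neighbor contributes, so $(\A'\A^{\ell-1})_{u,v}=1$ for all $u,v$, giving $\A'^\ell=\J$ and hence $\ell$-reachability of $\G'$.

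Finally, the identity $\A'^\ell=\J$ yields both the strong connectivity of $\G'$ and the bound $D'\le\ell$. For the reverse inequality I would invoke the Moore bound: since $\G'$ is $d$-out-regular on $d^\ell$ vertices, the ball of radius $\ell-1$ around any vertex has size at most $1+d+\cdots+d^{\ell-1}=(d^\ell-1)/(d-1)$, strictly less than $d^\ell$ for $d\ge 2$; hence $D'\ge\ell$, and combining both inequalities gives $D'=\ell$. The most delicate step is the check that each $v\in Z$ still receives exactly one arc from $X$ in $\G'$, which rests on the bijectivity of the permutations $\alpha_j$; once that is in place, the remainder is routine symbol bookkeeping.
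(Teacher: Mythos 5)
Your proof is correct, and while it opens exactly as the paper does, its core argument takes a genuinely different route. Both proofs begin with the same key verification: each $v=x_2x_3\ldots x_{\ell-1}mj\in Z$ receives exactly one arc from $X$ in $\G'$ (namely from $x_1x_2\ldots x_{\ell-1}\alpha_j^{-1}(m)$, by bijectivity of $\alpha_j$) and exactly one in $\G$, so in-degrees on $Z$ are preserved, $\G'$ is $d$-regular, and Proposition~\ref{propoA'A} applies. From there the paper argues combinatorially: it invokes Theorem~\ref{teo:principal0} to reduce the problem to exhibiting, for each $u\in X$ and each target $v=z_1\ldots z_\ell$, an explicit $\ell$-walk in $\G'$, which it constructs by back-solving the intermediate symbols $y_\ell=z_\ell$, $y_{\ell-1}=\alpha_{y_\ell}^{-1}(z_{\ell-1}),\ldots$ You instead stay algebraic: feeding $p(x)=x^{\ell}$ into Proposition~\ref{propoA'A} gives $\A'^{\ell}=\A'\A^{\ell-1}$, the matrix $\A^{\ell-1}$ of $B(d,\ell)$ is the indicator of ``last symbol of $w$ equals first symbol of $v$,'' and the out-neighbors of any vertex in $\G'$ (modified or not) realize each last symbol exactly once, whence $\A'^{\ell}=\J$. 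Your route is shorter, bypasses the symbol bookkeeping, and delivers the stronger UPP conclusion (exactly one $\ell$-walk between any pair, i.e.\ Eq.~\eqref{matrixEqdB}) rather than mere existence of such walks; you also close the lower bound $D'\ge\ell$ explicitly via the Moore bound, a step the paper leaves implicit (its Corollary only gives $D'\ge \ell-1$). What the paper's construction buys in exchange is self-containedness — it never needs the structure of $\A^{\ell-1}$ — and an explicit routing scheme for the walks, which is of independent interest for these network-motivated digraphs.
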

\begin{proof}
First, we only need to prove in-regularity (that is, constant in-degree) for every vertex of $Z=\G^+(X)$ given by
\eqref{vertex-of-Z}. But such a vertex is adjacent from the vertices
$$
hx_2\ldots x_{\ell-1}\alpha_j(k),\quad h\neq x_1,\quad\mbox{and}\quad x_1x_2\ldots x_{\ell-1}k.
$$
Moreover, according to Theorem~\ref{teo:principal0}, it suffices to show that, from each vertex
$u=x_1x_2\ldots x_{\ell-1}k\in X$, there is an
$\ell$-walk from $u$ to every other vertex $v=z_1z_2\ldots z_{\ell-1}z_{\ell}$ in $\G'$. To this end, we consider the following walk $u_0(=u),u_1,\ldots,u_{\ell-1},u_{\ell}$ with
\begin{align*}
u_0 &=x_1x_2\ldots x_{\ell-1}k,\\
u_1 &= x_2x_3x_4\ldots x_{\ell-1}\alpha_{y_1}(k)y_1,\\
u_2 &=x_3x_4\ldots x_{\ell-1}\alpha_{y_1}(k)\alpha_{y_2}(y_1)y_2,\\
u_3 &= x_4\ldots x_{\ell-1}\alpha_{y_1}(k)\alpha_{y_2}(y_1)\alpha_{y_3}(y_2)y_3,\\
    & \hskip.2cm\vdots\\
u_{\ell} &= \alpha_{y_2}(y_1)\alpha_{y_3}(y_2)\ldots\alpha_{y_{\ell}}(y_{\ell-1})y_{\ell},
\end{align*}
where, if $u_i\not\in X$ for some $i$, it is assumed that in $u_{i+1}$ all the $\alpha_j$'s are the identity (since there are no changes in the out-going arcs of the former), and
$$
y_{\ell}=z_{\ell},\quad y_{\ell-1}=\alpha_{y_{\ell}}^{-1}(z_{\ell-1}),\ \ldots\ ,
y_2=\alpha_{y_3}^{-1}(z_2),\quad y_1=\alpha_{y_2}^{-1}(z_1),
$$
so giving $u_{\ell}=z_1z_2\ldots z_{\ell-1}z_{\ell}=v$, as desired.
\end{proof}

By way of example,
consider the modified De Bruijn digraph of Figure~\ref{fig:de-bruijn-estrany}, obtained
from $B(2,3)$ by considering the set $X=\{100,101\}$ (so that $Y=\{010,110\}$), and removing the arcs
$100\rightarrow 001$ and $101\rightarrow 011$ 
to set $100\rightarrow011$ and $101\rightarrow 001$. (This corresponds to take the permutations $\alpha_0=\iota$ (the identity) and $\alpha_1=(01)$). Such a digraph was first shown by Fiol, Alegre, Yebra, and F\`abrega in~\cite{FiAlYeFa85}.


\begin{figure}[t]
    \begin{center}
    \hskip-1cm
        \includegraphics[width=16cm]{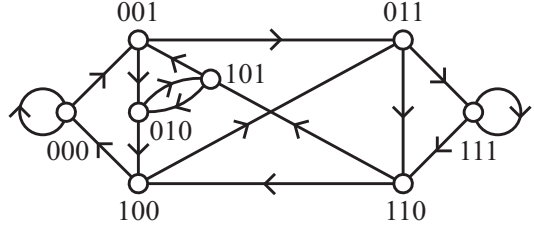}
    \end{center}
    \vskip-19.75cm
	\caption{The modified De Bruijn digraph $B'(2,3)$.}
	\label{fig:de-bruijn-estrany}
\end{figure}

The adjacency matrices of $B(2,3)$ and $B'(2,3)$, with the modified 1's in bold, are, respectively,

\begin{figure}[t]
    \begin{center}
    \hskip-1cm
        \includegraphics[width=18cm]{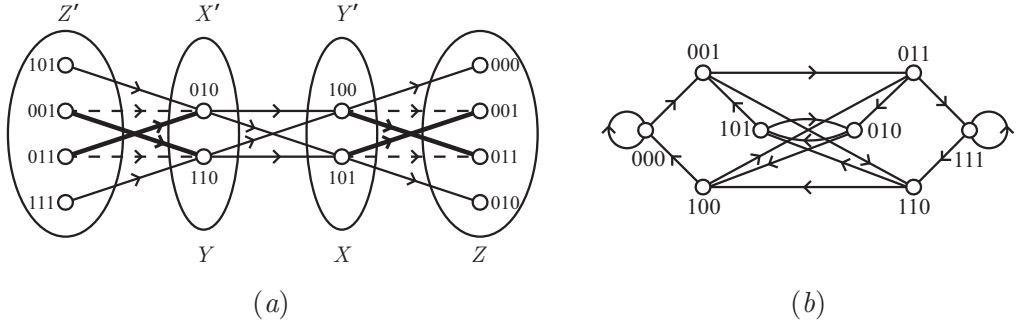}
    \end{center}
    \vskip-8.75cm
	\caption{$(a)$ The ``left and right modifications'' of $B^*(2,3)$; $(b)$ The doubly modified De Bruijn digraph $B^*(2,3)$.}
	\label{fig:doble-bruijn-estrany}
\end{figure}

$$
\A=\left(
  \begin{array}{cccccccc}
    1 & 1 & 0 & 0 & 0 & 0 & 0 & 0 \\
    0 & 0 & 1 & 1 & 0 & 0 & 0 & 0 \\
    0 & 0 & 0 & 0 & 1 & 1 & 0 & 0 \\
    0 & 0 & 0 & 0 & 0 & 0 & 1 & 1 \\
    1 & \1 & 0 & 0 & 0 & 0 & 0 & 0 \\
    0 & 0 & 1 & \1 & 0 & 0 & 0 & 0 \\
    0 & 0 & 0 & 0 & 1 & 1 & 0 & 0 \\
    0 & 0 & 0 & 0 & 0 & 0 & 1 & 1 \\
  \end{array}
\right)\quad
\mbox{ and }
\quad
\A'=\left(
  \begin{array}{cccccccc}
    1 & 1 & 0 & 0 & 0 & 0 & 0 & 0 \\
    0 & 0 & 1 & 1 & 0 & 0 & 0 & 0 \\
    0 & 0 & 0 & 0 & 1 & 1 & 0 & 0 \\
    0 & 0 & 0 & 0 & 0 & 0 & 1 & 1 \\
    1 & 0 & 0 & \1 & 0 & 0 & 0 & 0 \\
    0 & \1 & 1 & 0 & 0 & 0 & 0 & 0 \\
    0 & 0 & 0 & 0 & 1 & 1 & 0 & 0 \\
    0 & 0 & 0 & 0 & 0 & 0 & 1 & 1 \\
  \end{array}
\right).
$$
According to Proposition~\ref{propo-cospectral}, both digraphs are cospectral with
$$
\spec B(2,3)=\spec B'(2,3)=\{0^7,2^1\},
$$
where the superscripts denote the (algebraic)
multiplicites of the eigenvalues.

Observe that $B(2,3)$ and $B'(2,3)$, shown in Figs. \ref{fig:de-bruijn-normals} and \ref{fig:de-bruijn-estrany} respectively, are not isomorphic since, for instance, the former has two cycles (closed walks without repeated vertices) of length $5$:
\begin{align*}
 &000\rightarrow001\rightarrow011\rightarrow110\rightarrow100\rightarrow000,
\quad\mbox{and}\quad
111\rightarrow110\rightarrow100\rightarrow001\rightarrow011\rightarrow111,
\end{align*}
whereas the latter  has three:
\begin{align*}
 &000\rightarrow001\rightarrow011\rightarrow110\rightarrow100\rightarrow000,
\quad
111\rightarrow110\rightarrow101\rightarrow001\rightarrow011\rightarrow111,\\
\quad \mbox{and}\quad
& 010\rightarrow100\rightarrow011\rightarrow110\rightarrow101\rightarrow010.
\end{align*}

Of course, the above situation is not the general case.  Many digraphs obtained by using the modifications described in Proposition \ref{sequence-dB-modify} are cospectral, but also isomorphic to the original digraph.
So, an interesting open problem would be to determine the conditions on the $d$-permutations $\alpha_j$, for $j=0,\ldots,d-1$, to obtain nonisomorphic cospectral digraphs.

In fact, a computer exploration shows that the only nonisomorphic $3$-reachable $2$-regular digraphs are $B(2,3)$, $B'(2,3)$, and $B''(2,3)=\overline{B'(2,3)}$, the converse digraph of $B'(2,3)$, which can be also obtained by using our method.
Indeed, it suffices to take $\overline{B(2,3)}$ and apply Corollary \ref{coro:principal0}
with $X'=X=\{100,101\}$ (so that $Y'=Y\{010,110\}$), and change the same arcs as before, but now with opposite directions.

Another possible interesting perturbation is to apply a double modification:
The one proposed in Theorem \ref{teo:principal0} (or Proposition \ref{propo-cospectral}) with the sets $Y$, $X$, and $Z(=\G^+(X))$; and that of Corollary \ref{coro:principal0} with $Z'=\G^-(X')$, $X'(=Y)$, and $Y'(=X)$. For example, in the case of $B(2,3)$, these modifications are depicted in Fig. \ref{fig:doble-bruijn-estrany}$(a)$, where the dashed arcs are changed to the bold ones, and the obtained digraph is shown in Fig. \ref{fig:doble-bruijn-estrany}$(b)$. Notice that, in this case, we are not longer under the conditions of Proposition \ref{sequence-dB-modify} and, hence, the resulting digraph $B^*(2,3)$ with adjacency matrix $\A$, although still cospectral with  $B(2,3)$, is not a UPP digraph, that is, $\A^3\neq \J$. (But, in fact, we have $\A^4=2\J$, which indicates the existence of exactly 2 walks of length 4 between any two vertices.)

\section{The modified Kautz digraphs}

\begin{figure}[t]
    \vskip-.5cm
    \begin{center}
        \includegraphics[width=15cm]{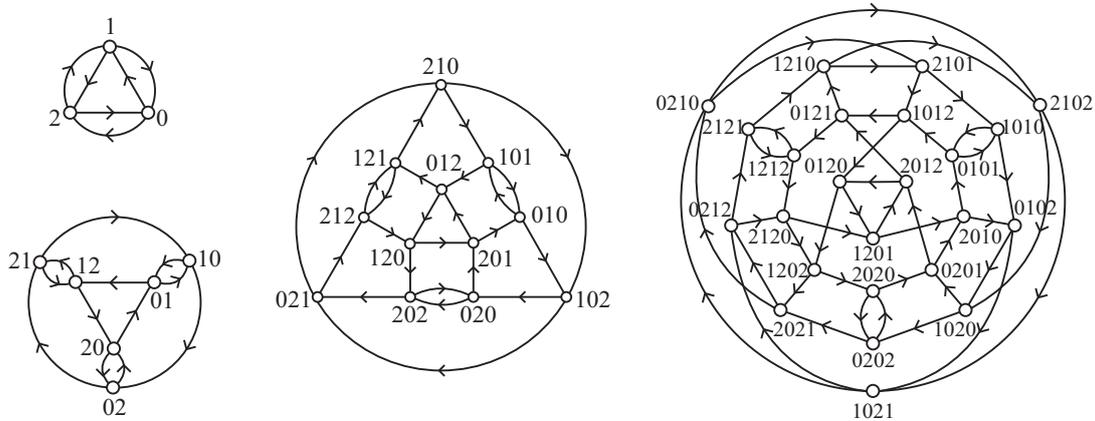}
    \end{center}
    \vskip-5.35cm
	\caption{The Kautz digraphs $K(2,1)$, $K(2,2)$, $K(2,3)$, and $K(2,4)$.}
	\label{fig:kautz-normals}
\end{figure}

\begin{figure}[t]
    \begin{center}
        \includegraphics[width=12cm]{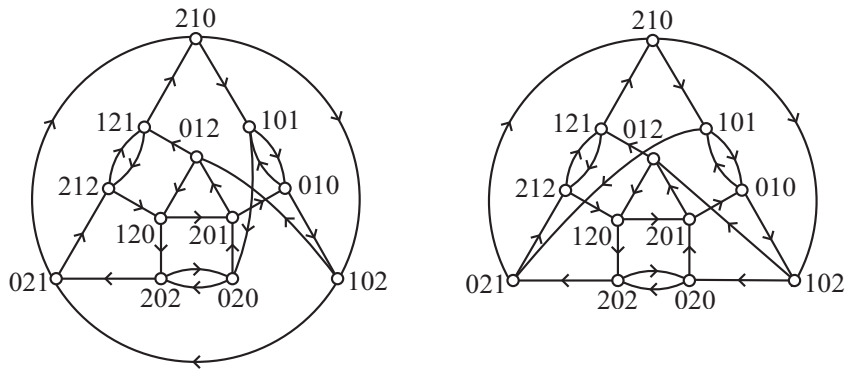}
    \end{center}
    \vskip-12.15cm
	\caption{The modified Kautz digraphs $K'(2,3)$ and $K''(2,3)$.}
	\label{fig:kautz-estrany}
\end{figure}

The Kautz digraph $K(d,\ell)$ is defined as the De Bruin digraph $B(d,\ell)$ but now the consecutive symbols $x_i$ and $x_{i+1}$, taken from the alphabet $\{0,1,\ldots,d\}$, must be different.
The first four Kautz digraphs $K(2,\ell)$, for $\ell=1,2,3,4$, are represented in Figure~\ref{fig:kautz-normals}. Again, it is well-known that any
of these digraphs is the line digraph of the previous one (see Fiol, Yebra, and Alegre \cite{FiYeAl84}).
The adjacency matrix $\A$ of the Kautz digraph $K(d,\ell)$ satisfies the matrix equation
\begin{equation}
\label{matrixEqKa}
\A^{\ell}+\A^{\ell-1}=\J,
\end{equation}
so that between every pair of vertices $u,v$ there is exactly one walk of
 length $\ell$ or $\ell-1$.

Contrarily to the De Bruijn digraphs, some experimental results seems to show that all the modified Kautz digraphs $K'(d,\ell)$ have diameter $D'=\ell+1$.
For example, Figure~\ref{fig:kautz-estrany} shows two modified Kautz digraphs, $K'(2,3)$ and $K''(2,3)$, where,
in both cases, $X=\{101,102\}$ (so that $Y=\{010,210\}$). Then, $K'(2,3)$ is obtained by removing the arcs $101\rightarrow012$ and $102\rightarrow020$ to set the new arcs $101\rightarrow020$ and $102\rightarrow012$; whereas $K''(2,3)$
is obtained by changing $101\rightarrow012$ and $102\rightarrow021$ to get $101\rightarrow021$ and $102\rightarrow012$.

In concordance with Proposition~\ref{propo-cospectral}, all these digraph are cospectral with
$$
\spec K(2,3) = \spec K'(2,3) = \spec K''(2,3)=\{-1^2,0^9,2^1\}.
$$

\textbf{Acknowledgments. } This research is supported by the
{\em Ministerio de Ciencia e Innovaci\'on} and the {\em European Regional
Development Fund} under project MTM2014-60127-P, and the {\em Catalan Research
Council} under project 2014SGR1147.


\end{document}